\documentclass[12pt]{article}

\usepackage{mathtools}
\usepackage{amssymb}
\usepackage{amsthm}
\usepackage{xcolor}
\usepackage{tikz}
\usepackage{fullpage}

\DeclareSymbolFont{bbold}{U}{bbold}{m}{n}
\DeclareSymbolFontAlphabet{\mathbbold}{bbold}

\newcommand{\N}{\mathbb{N}}

\newcommand{\R}{\mathbb{R}}

\newcommand{\1}{\mathbbold{1}_\Omega}

\newcommand{\Dir}{{\rm D}}
\newcommand{\Neu}{{\rm N}}

\makeatletter
\newcommand{\uD}{\@ifnextchar1{\u@DN{\Dir}{\mu_1}\@gobble}{\u@DN{\Dir}{\mu}}}
\newcommand{\uN}{\@ifnextchar1{\u@DN{\Neu}{\mu_1}\@gobble}{\u@DN{\Neu}{\mu}}}
\newcommand{\u@DN}[2]{u_{#1\mkern-1mu,\mkern1mu#2}}
\makeatother

\newcommand{\e}{{\rm e}}

\providecommand{\form}{\tau}
\providecommand{\scpr}[2]{\left( #1 \,\middle|\, #2 \right)}

\renewcommand{\sp}{\scpr}
\newcommand{\from}{\colon}

\let\phi\varphi

\let\leq\leqslant

\let\geq\geqslant

\makeatletter
\def\@row#1,{#1\@ifnextchar;{\@gobble}{&\@row}}
\def\@matrix{%
    \expandafter\@row\my@arg,;%
    \@ifnextchar({\\ \get@in@paren{\@matrix}}{\after@matrix}%
    }
\def\matrixtype#1#2#3{%
    \ifmmode\def\after@matrix{\end{#2}\right#3}%
    \else\def\after@matrix{\end{#2}\right#3$}$\fi
    \left#1\begin{#2}\get@in@paren{\@matrix}%
    }
\def\@column#1,{#1\@ifnextchar;{\@gobble}{\\ \@column}}
\newcommand\vect{}
\def\svect(#1){\left(\begin{smallmatrix}\@column#1,;\end{smallmatrix}\right)}
\def\vect{\get@in@paren{\@vect}}
\def\@vect{\left(\begin{matrix}\expandafter\@column\my@arg,;\end{matrix}\right)}
\def\get@in@paren#1({\def\my@arg{}\def\my@rest{}\def\after@get{#1}\get@arg}
\let\e@a\expandafter
\def\get@arg#1){\e@a\kl@test\my@rest#1(;}
\def\kl@test#1(#2;{\e@a\def\e@a\my@arg\e@a{\my@arg#1}%
                   \ifx:#2:\let\my@exec\after@get
                   \else\let\my@exec\get@arg
                        \e@a\def\e@a\my@arg\e@a{\my@arg(}%
                        \def@rest#2;%
                   \fi\my@exec}
\def\def@rest#1(;{\def\my@rest{#1\kl@zu}}
\def\kl@zu{)}

\makeatletter
\newcommand\MyPairedDelimiter{%
  \@ifstar{\My@Paired@Delimiter{{}}}
          {\My@Paired@Delimiter{}}%
}
\newcommand\My@Paired@Delimiter[4]{%
  \newcommand#2{%
    \@ifstar{\start@PD{#1}{\delimitershortfall=-1sp}{#3}{#4}}
            {\start@PD{#1}{}{#3}{#4}}%
  }%
}
\newcommand\start@PD[5]{%
  #1\mathopen{\mathpalette\put@delim@helper{\put@delim{#2}{#3}{.}{#5}}}%
  #5%
  \mathclose{\mathpalette\put@delim@helper{\put@delim{#2}{.}{#4}{#5}}}%
}
\newcommand\put@delim@helper[2]{%
  \hbox{$\m@th\nulldelimiterspace=0pt #2#1$}%
}
\newcommand\put@delim[5]{%
  \setbox\z@\hbox{$\m@th#5{#4}$}%
  \setbox\tw@\null
  \ht\tw@\ht\z@ \dp\tw@\dp\z@
  #1#5%
  \left#2\box\tw@\right#3%
}

\makeatother
\MyPairedDelimiter*{\abs}{\lvert}{\rvert}
\MyPairedDelimiter*{\norm}{\lVert}{\rVert}
\MyPairedDelimiter{\set}{\{}{\}}

\theoremstyle{plain} 
\newtheorem{theorem}{Theorem}[section]
\newtheorem{corollary}[theorem]{Corollary}

\theoremstyle{definition}
\newtheorem{example}[theorem]{Example}

\newtheorem{remark}[theorem]{Remark}

\usepackage{enumitem}

\setenumerate[1]{nolistsep} 
\setenumerate[2]{nolistsep} 

\setcounter{secnumdepth}{1}
\setcounter{tocdepth}{1}

\begin{document}

\medmuskip=4mu plus 2mu minus 3mu
\thickmuskip=5mu plus 3mu minus 1mu
\belowdisplayshortskip=9pt plus 3pt minus 5pt

\title{On the perturbation of positive semigroups}

\author{Christian Seifert and Daniel Wingert}

\date{}

\maketitle

\begin{abstract}
 We prove a perturbation result for
 positive semigroups, 
 thereby extending a heat kernel estimate by Barlow, Grigor'yan and Kumagai for Dirichlet forms (\cite{bgk2009}) to positive semigroups. 
 This also leads to a generalization of domination for semigroups on $L_p$-spaces.
\end{abstract}

Keywords: positive $C_0$-semigroups, kernel estimates

MSC 2010: 47D06, 47B65, 60J75

\section{Introduction}

In recent years there has been increasing interest in heat kernel estimates for non-local Dirichlet forms.
One important tool for deriving upper bounds is a perturbation result proven in \cite[Lemma 3.1]{bgk2009}, 
which is used in several recent articles \cite{bbck2009,ck2008,f2009,gh2008}. 
The aim of this paper is to prove this perturbation result in the more general context of positive semigroups, 
so that it can also be applied to perturbations of Dirichlet forms by (suitable) measures. 
They are of substantial interest for example in quantum mechanics.
In view of Banach lattices our result can be seen as a generalization of domination for positive semigroups; 
compare Theorem \ref{thm:perturbation} and \cite[Proposition C-II, 4.8]{Nagel1986}.


In the remaining part of this section we explain the situation. 
The perturbation result is stated in Section \ref{sec:perturbation}, where also some examples are given.

Let $(\Omega,m)$ be a $\sigma$-finite measure space (in fact, it suffices to have a localizable space, where $L_1(m)' = L_\infty(m)$; cf.~\cite[Theorem 5.1]{Segal1951}).
Let $T\from[0,\infty)\to L(L_2(m))$ be a strongly continuous semigroup on $L_2(m)$ (equipped with the inner product $\sp{\cdot}{\cdot}$), i.e.,
\[T(0) = I,\quad T(s+t) = T(s)T(t) \quad(s,t\geq 0),\]
and
\[\lim_{t\to 0} T(t)u = u \quad(u\in L_2(m)).\]
Let $A$ be the generator of $T$, i.e.,
\[D(A):= \set{u\in L_2(m);\; \lim_{t\to 0}\frac{1}{t}(T(t)u-u) =:Au \;\text{exists}}.\]
Note that $A$ also uniquely defines $T$.

For $1\leq p<\infty$ the space $L_2(m)\cap L_p(m)$ is dense in $L_p(m)$. Thus, if there exist $M\geq 1$ and $\omega\in\R$ such that
\begin{equation}
\label{eq:exp_bound}
  \norm{T(t)u}_p \leq Me^{\omega t} \norm{u}_p \quad(u\in L_2(m)\cap L_p(m))
\end{equation}
then $T$ can be extended to a strongly continuous semigroup $T_p$ on $L_p(m)$.
We remark that $T_p(t)u = T(t) u$ for all $t\geq 0$ and $u\in L_2(m)\cap L_p(m)$.
Let $A_p$ be the generator of $T_p$.
Then, for $\lambda>\omega$, where $\omega$ is as in \eqref{eq:exp_bound} for $1\leq p,q<\infty$, we have $\lambda\in \varrho(A_p)\cap\varrho(A_q)$ and
\[(\lambda-A_p)^{-1} u = (\lambda-A_q)^{-1} u  \quad(u\in L_p(m)\cap L_q(m)).\]
The case $p=\infty$ is more delicate, since $L_2(m)\cap L_\infty(m)$ is in general not dense in $L_\infty(m)$. 
Therefore, consider the adjoint semigroup $T^*$ of $T$ generated by the adjoint $A^*$ of $A$, 
and let $T_\infty(t):= T_1^*(t)'$ (the dual operator of $(T^*)_1(t)$) for all $t\geq 0$. Note that $T_\infty$ is still a semigroup, 
but it may not be strongly continuous. However, for $\lambda$ sufficiently large we observe $\lambda\in\rho(A) \cap \rho(A_1^*)$ and
\[(\lambda-A)^{-1}u = ((\lambda-A_1^*)^{-1})'u \quad(u\in L_2(m)\cap L_\infty(m)).\]

\begin{remark}
\label{rem:extension}
  Let $1\leq p,q\leq \infty$, $B_p\in L(L_p(m))$, $B_q\in L(L_q(m))$ such that $B_p|_{L_p(m)\cap L_q(m)} = B_q|_{L_q(m)\cap L_q(m)}$. Then $B_p$ can be extended to $L_p(m)+L_q(m)$ via
  \[B_p(u+v):= B_p u + B_q v \quad(u\in L_p(m), v\in L_q(m)).\]
  This is indeed well-defined, since the operators are equal on the intersection.
\end{remark}
By the previous remark, if $T$ acts on $L_2(m)$ and $T^*$ acts on $L_1(m)$ then we can extend $T$ and the resolvent of $A$ to $L_2(m) + L_\infty(m)$.

Recall that $L_p(m)$ is also a Banach lattice for all $1\leq p\leq \infty$. A semigroup $T_p$ on $L_p(m)$ is called positive, 
if $T_p(t)u\geq 0$ for all $t\geq 0$ and $0\leq u\in L_p(m)$. For details concerning positive semigroups see e.g.~\cite{Nagel1986, o2005}.
\begin{remark}
  Let $1\leq p<\infty$, $T_p$ be a $C_0$-semigroup on $L_p(m)$ with generator $A_p$.
  Then $T_p$ is positive if and only if $(\lambda-A_p)^{-1}$ is positive for all $\lambda>\omega$, where $\omega$ is as in \eqref{eq:exp_bound}.
\end{remark}

\section{A perturbation result}
\label{sec:perturbation}

There is a perturbation result for the heat kernels of Dirichlet forms proven by Barlow, Grigor'yan and Kumagai 
in \cite[Lemma 3.1]{bgk2009} using Meyers decomposition of stochastic processes \cite{m1975}. 
Here we prove an analogous perturbation result within an analytic context. 
More precisely, we show that such a perturbation result for semigroups is in fact equivalent to a certain condition on the
corresponding generators.
This condition is fulfilled for the perturbations regarded in \cite{bgk2009}, 
however we do not assume the forms associated with the semigroups to be symmetric Dirichlet forms.
Our perturbation theorem also extends well-known results on domination of semigroups; cf.~Remark \ref{rem:domination} below.

\begin{theorem}\label{thm:perturbation}
  Let $T_0$ be a positive $C_0$-semigroup on $L_2(m)$ with generator $A_0$, $T$ a positive $C_0$-semigroup on $L_2(m)$ with generator $A$. 
  Assume 
  there exist $M\geq 1$ and $\omega\in\R$ such that
  \[\norm{T(t)u}_1 \leq Me^{\omega t} \norm{u}_1 \quad(u\in L_2(m)\cap L_1(m)),\; \norm{T_0(t)^*v}_1 \leq Me^{\omega t} \norm{v}_1 \quad(v\in L_2(m)\cap L_1(m))\]
  for all $t\geq 0$. 
  Then the following are equivalent:
  \begin{enumerate}
    \item
      There exists $C_1\in\R$ such that for $t\geq 0$ we have
      \[T(t) u \leq T_0(t) u + C_1 t e^{\omega t} \norm{u}_1 \1 \quad(0\leq u\in L_2(m)\cap L_1(m)).\]
    \item
      There exists $C_2\in\R$ such that
      \begin{equation}
      \label{eq:one-sided}
	\sp{Au}{v} \leq \sp{u}{A_0^* v} + C_2 \norm{u}_1\norm{v}_1 \quad(0\leq u\in D(A)\cap L_1(m), 0\leq v\in D(A_0^*)\cap L_1(m)).
      \end{equation}
    \item
      There exists $C_3\in\R$ such that for $\lambda>\omega$ we have
      \[(\lambda-A)^{-1} u \leq (\lambda-A_0)^{-1} u + \frac{C_3}{(\lambda-\omega)^2}\norm{u}_1 \1 \quad(0\leq u\in L_2(m)\cap L_1(m)).\]
  \end{enumerate}
\end{theorem}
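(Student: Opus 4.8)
The plan is to establish the cycle of implications $(a)\Rightarrow(b)\Rightarrow(c)\Rightarrow(a)$, which is the natural way to prove the equivalence of three statements while keeping each individual argument as short as possible. The three conditions are genuinely different in character: $(a)$ is a pointwise comparison of the semigroups, $(b)$ is an infinitesimal (generator-level) comparison, and $(c)$ is a comparison of the resolvents. Moving from semigroups to generators is differentiation, moving from generators to resolvents is a Laplace-type integration, and moving from resolvents back to semigroups is the exponential formula; so each step converts one representation of the semigroup into another.

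For $(a)\Rightarrow(b)$ I would take $0\le u\in D(A)\cap L_1(m)$ and $0\le v\in D(A_0^*)\cap L_1(m)$, test the inequality in $(a)$ against $v$, and use positivity of $v$ to preserve the inequality under $\sp{\cdot}{v}$. This gives
\[
\sp{T(t)u}{v} \le \sp{T_0(t)u}{v} + C_1 t e^{\omega t}\norm{u}_1 \sp{\1}{v}.
\]
The point is then to subtract $\sp{u}{v}$, divide by $t$, and let $t\downarrow 0$: the left side tends to $\sp{Au}{v}$ because $u\in D(A)$, and $\sp{T_0(t)u}{v}=\sp{u}{T_0(t)^*v}$ tends with difference quotient to $\sp{u}{A_0^*v}$ because $v\in D(A_0^*)$. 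The error term contributes $C_1 e^{\omega t}\sp{\1}{v}\to C_1\sp{\1}{v}$, and since $\sp{\1}{v}=\norm{v}_1\le\norm{u}_1\norm{v}_1$ is not quite right in general, I would instead bound $\sp{\1}{v}=\int v\,dm = \norm{v}_1$ and absorb the missing $\norm{u}_1$ factor; one expects $C_2$ to be comparable to $C_1$ up to the exponential bound. For $(b)\Rightarrow(c)$ I would write the resolvent difference applied to $0\le u$ and test against a positive $v\in D(A_0^*)$, using $\sp{(\lambda-A)^{-1}u - (\lambda-A_0)^{-1}u}{v}$; expressing this via the resolvent identity and feeding in $(b)$ should produce the factor $1/(\lambda-\omega)^2$, which is exactly the Laplace transform of $t e^{\omega t}$. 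For $(c)\Rightarrow(a)$ I would recover the semigroup from the resolvent through the exponential/Post--Widder formula $T(t)u=\lim_{n}(\tfrac{n}{t}(\tfrac{n}{t}-A)^{-1})^n u$, iterating the resolvent comparison and summing the resulting error, where the $1/(\lambda-\omega)^2$ form of the bound is what makes the iterated error telescope into the linear-in-$t$ factor $t e^{\omega t}$.

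The main obstacle will be the bookkeeping in $(c)\Rightarrow(a)$: one must control how the per-resolvent error $\tfrac{C_3}{(\lambda-\omega)^2}\norm{u}_1\1$ propagates through the $n$-fold iterate $(\tfrac{n}{t}(\tfrac{n}{t}-A)^{-1})^n$. Because the error term is a constant multiple of $\1$ rather than an $L_2$ function with good decay, one has to track that each application of a further resolvent $(\tfrac{n}{t}-A)^{-1}$ to $\1$ stays controlled — this is where the $L_1$-boundedness hypothesis on $T$ (and the dual bound on $T_0^*$) enters crucially, since $\1$ need not lie in $L_2(m)$ and the operators must first be understood on $L_1(m)+L_\infty(m)$ in the sense of Remark~\ref{rem:extension}. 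I expect the careful step to be verifying that $(\tfrac{n}{t}-A)^{-1}\1$ is dominated appropriately so the geometric-type accumulation of errors converges, in the limit $n\to\infty$, to precisely $C_1 t e^{\omega t}\norm{u}_1\1$ with $C_1$ determined by $C_3$. The forward direction $(a)\Rightarrow(b)$ and the Laplace step $(b)\Rightarrow(c)$ should be comparatively routine once the correct integrability is in place.
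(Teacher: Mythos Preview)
Your cycle $(a)\Rightarrow(b)\Rightarrow(c)\Rightarrow(a)$ is exactly the paper's proof: differentiate the semigroup inequality at $t=0$; rewrite $\sp{(\lambda-A)^{-1}u-(\lambda-A_0)^{-1}u}{v}=\sp{A\tilde u}{\tilde v}-\sp{\tilde u}{A_0^*\tilde v}$ with $\tilde u=(\lambda-A)^{-1}u$, $\tilde v=(\lambda-A_0^*)^{-1}v$ and apply (b); then iterate (c) and pass to the Euler limit $\bigl(\tfrac{n}{t}(\tfrac{n}{t}-A)^{-1}\bigr)^n u\to T(t)u$. Two small fixes: in $(a)\Rightarrow(b)$ the factor $\norm{u}_1$ is already present in the error term of (a), so you get $C_2=C_1$ directly with nothing to ``absorb''; and in the iteration for $(c)\Rightarrow(a)$ the accumulated error is acted on by $(\lambda-A_0)^{-1}$ (not $(\lambda-A)^{-1}$), so it is the bound $(\lambda-A_0)^{-k}\1\le \tfrac{M}{(\lambda-\omega)^k}\1$, coming from the dual $L_1$-hypothesis on $T_0^*$, that you need.
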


\begin{remark}
  As the proof will show, in ``(a) $\Rightarrow$ (b)'' we have $C_2:=C_1$, in ``(b) $\Rightarrow$ (c)'' we have $C_3:=C_2M^2$ and in ``(c) $\Rightarrow$ (a)'' we have $C_1:=C_3M^2$.
\end{remark}

\begin{remark}\label{rem:domination}
  In case $C_1=C_2=C_3=0$ we recover well-known domination results, see e.g.\ \cite[Proposition C-II, 4.8]{Nagel1986} for domination of positive semigroups in Banach lattices.
\end{remark}

\begin{proof}[Proof of Theorem \ref{thm:perturbation}]
  ``(a) $\Rightarrow$ (b)'':
  Let $0\leq u\in D(A)\cap L_1(m)$, $0\leq v\in D(A_0^*)\cap L_1(m)$. Then, for $t>0$,
  \begin{align*}
    \sp{\frac{1}{t}(T(t)u-u)}{v} & = \sp{\frac{1}{t}T(t)u}{v} - \sp{\frac{1}{t}u}{v} \\
    & \leq \sp{\frac{1}{t}T_0(t)u}{v} + C_1 e^{\omega t} \norm{u}_1\norm{v}_1 - \sp{\frac{1}{t}u}{v} \\
    & = \sp{u}{\frac{1}{t}(T_0(t)^*v-v)} + C_1 e^{\omega t} \norm{u}_1\norm{v}_1.
  \end{align*}
  The limit $t\to 0$ yields the assertion.
  
  ``(b) $\Rightarrow$ (c)'':  
  Let $0\leq u\in L_2(m)\cap L_1(m)$, $0\leq v\in L_2(m)\cap L_1(m)$, $\lambda >\omega$. 
  Since $T$ and $T_0$ are positive, also the resolvents of $A$ and $A_0$ (and hence also of $A_0^*$) are positive.
  Thus $\tilde{u}:=(\lambda-A)^{-1}u, \tilde{v}:=(\lambda-A_0^*)^{-1}v\geq 0$. Furthermore, 
  $\tilde{u}\in D(A)\cap L_1(m)$ and $\tilde{v}\in D(A_0^*)\cap L_1(m)$.
  Hence,
  \begin{align*}
    \sp{(\lambda-A)^{-1}u - (\lambda-A_0)^{-1}u}{v} & = \sp{A\tilde{u}}{\tilde{v}} - \sp{\tilde{u}}{A_0^*\tilde{v}} \leq C_2\norm{\tilde{u}}_1\norm{\tilde{v}}_1.
  \end{align*}    
  Since $\norm{\tilde{u}}_1 = \norm{(\lambda-A)^{-1}u}_1 \leq \frac{M}{\lambda-\omega}\norm{u}_1$ and similarly for $\tilde{v}$, we obtain
  \[\sp{(\lambda-A)^{-1}u - (\lambda-A_0)^{-1}u}{v} \leq \frac{C_2M^2}{(\lambda-\omega)^2} \norm{u}_1\norm{v}_1.\]
  Thus,
  \[(\lambda-A)^{-1} u - (\lambda-A_0)^{-1} u \leq \frac{C_2M^2}{(\lambda-\omega)^2}\norm{u}_1 \1.\]
  
  ``(c) $\Rightarrow$ (a)'':  
  Let $0\leq u\in L_2(m)\cap L_1(m)$ and $\lambda>\omega$. 
  In view of Remark \ref{rem:extension}, by induction on $n$ we obtain
  \[(\lambda-A)^{-n} u \leq (\lambda-A_0)^{-n} u + \frac{C_3}{(\lambda-\omega)^2} \sum_{k=0}^{n-1} \norm{(\lambda-A)^{-k}u}_1 (\lambda-A_0)^{-n+1+k} \1.\]
  Note that $\norm{(\lambda-A)^{-k}u}_{1} \leq \frac{M}{(\lambda-\omega)^k}\norm{u}_1$ and $\norm{(\lambda-A_0^*)^{-k}u}_1 \leq \frac{M}{(\lambda-\omega)^k}\norm{u}_1$ for all $k\in\N$ by the Hille-Yoshida-Phillips Theorem.
  Hence, also $(\lambda-A_0)^{-k}\1 = ((\lambda-A_0^*)^{-k})' \1 \leq \frac{M}{(\lambda-\omega)^k} \1$ for $k\in\N$.
  Thus, we observe
  \[(\lambda-A)^{-n} u \leq (\lambda-A_0)^{-n} u + \frac{nC_3M^2}{(\lambda-\omega)^{n+1}}\norm{u}_1 \1 \quad(n\in\N).\]
  Let $t>0$. For large enough $n$ we have $\frac{n}{t} > \omega$. Hence,
  \begin{align*}
    (\tfrac{n}{t})^n(\tfrac{n}{t}-A)^{-n} u & \leq (\tfrac{n}{t})^n(\tfrac{n}{t}-A_0)^{-n} u + (\tfrac{n}{t})^n\frac{nC_3M^2}{(\tfrac{n}{t}-\omega)^{n+1}}\norm{u}_1 \1 \\
    & = (\tfrac{n}{t})^n(\tfrac{n}{t}-A_0)^{-n} u + \frac{(\tfrac{n}{t})^{n+1}}{(\tfrac{n}{t}-\omega)^{n+1}}tC_3M^2\norm{u}_1 \1 .
  \end{align*}
  For $n\to\infty$, we obtain
  \[T(t)u \leq T_0(t) u + e^{\omega t} tC_3M^2\norm{u}_1 \1. \qedhere\]
\end{proof}

We can now specialise to contraction semigroups.

\begin{corollary}
\label{cor:contr}
  Let $T_0$ be a positive $C_0$-semigroup on $L_2(m)$ with generator $A_0$, $T$ a positive $C_0$-semigroup on $L_2(m)$ with generator $A$.
  Assume that 
  \[\norm{T(t)u}_1 \leq \norm{u}_1 \quad(u\in L_2(m)\cap L_1(m)),\; \norm{T_0(t)^*v}_1 \leq \norm{v}_1 \quad(v\in L_2(m)\cap L_1(m))\]
  for all $t\geq 0$. Let $C\in\R$. Then the following are equivalent:
    \begin{enumerate}
    \item
      For $t\geq 0$ we have
      \[T(t) u \leq T_0(t) u + C t \norm{u}_1 \1 \quad(0\leq u\in L_2(m)\cap L_1(m)).\]
    \item
      We have
      \[\sp{Au}{v} \leq \sp{u}{A_0^* v} + C \norm{u}_1\norm{v}_1 \quad(0\leq u\in D(A)\cap L_1(m), 0\leq v\in D(A_0^*)\cap L_1(m)).\]
    \item
      For $\lambda>0$ we have
      \[(\lambda-A)^{-1} u \leq (\lambda-A_0)^{-1} u + \frac{C}{\lambda^2}\norm{u}_1 \1 \quad(0\leq u\in L_2(m)\cap L_1(m)).\]
  \end{enumerate}
\end{corollary}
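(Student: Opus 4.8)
The plan is to deduce this corollary directly from Theorem \ref{thm:perturbation} by specialising to the values $M = 1$ and $\omega = 0$. First I would observe that the hypotheses match: the contractivity assumptions $\norm{T(t)u}_1 \leq \norm{u}_1$ and $\norm{T_0(t)^*v}_1 \leq \norm{v}_1$ are exactly the exponential bounds of the theorem with $M = 1$ and $\omega = 0$. Moreover, under $\omega = 0$ the factor $e^{\omega t}$ in condition (a) and the factor $(\lambda-\omega)^2$ in condition (c) collapse to $1$ and $\lambda^2$ respectively, so statements (a)--(c) of the theorem read verbatim as statements (a)--(c) of the corollary.

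The only point requiring care is that the corollary fixes a \emph{single} constant $C$ and claims that (a)--(c) hold with this same $C$, whereas Theorem \ref{thm:perturbation} a priori permits three different constants $C_1, C_2, C_3$ in its three statements. To bridge this, I would invoke the remark following the theorem, which tracks how the constant propagates along the cyclic chain of implications: in ``(a) $\Rightarrow$ (b)'' one has $C_2 = C_1$, in ``(b) $\Rightarrow$ (c)'' one has $C_3 = C_2 M^2$, and in ``(c) $\Rightarrow$ (a)'' one has $C_1 = C_3 M^2$. Substituting $M = 1$ turns each of these relations into an identity, so the constant is preserved all the way around the cycle.

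Consequently, if (a) holds with constant $C$ then (b) follows with $C_2 = C$, then (c) follows with $C_3 = C$, and (a) is recovered with $C_1 = C$; the same holds starting from (b) or from (c). This establishes the equivalence of (a), (b), (c) for one and the same $C$, as claimed. The step I expect to be the crux is this bookkeeping of constants rather than any analytic difficulty: the fixed-constant equivalence hinges precisely on $M = 1$, since for $M > 1$ the factor $M^2$ would strictly enlarge the constant at each passage and the three statements could no longer share a common $C$.
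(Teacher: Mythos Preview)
Your proposal is correct and matches the paper's approach: the corollary is stated immediately after Theorem~\ref{thm:perturbation} with no separate proof, precisely because it is the specialisation $M=1$, $\omega=0$, and your use of the constant-tracking remark to keep a single $C$ around the cycle is exactly the intended reasoning.
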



In order to link our result to the version treated in \cite{bgk2009}, where estimates on the kernels were given, 
we state an easy consequence of Theorem \ref{thm:perturbation}.


\begin{corollary}
\label{cor:kernel}
  Let $T_0$ be a positive $C_0$-semigroup on $L_2(m)$ with generator $A_0$, $T$ a positive $C_0$-semigroup on $L_2(m)$ with generator $A$. 
  Assume there exist $M\geq 1$ and $\omega\in\R$ such that
  \[\norm{T(t)u}_1 \leq Me^{\omega t} \norm{u}_1 \quad(u\in L_2(m)\cap L_1(m)),\; \norm{T_0(t)^*v}_1 \leq Me^{\omega t} \norm{v}_1 \quad(v\in L_2(m)\cap L_1(m))\]
  for all $t\geq 0$, and there exists $C\in\R$ such that
      \[
	\sp{Au}{v} \leq \sp{u}{A_0^* v} + C \norm{u}_1\norm{v}_1 \quad(0\leq u\in D(A)\cap L_1(m), 0\leq v\in D(A_0^*)\cap L_1(m)).
      \]
  Let $T_0$ have a kernel $k^0$, i.e.,
  $k^0\from [0,\infty)\times \Omega^2\to [0,\infty)$ is measurable such that
  \[T_0(t) u = \int_\Omega k^0_t(\cdot,y) u(y) \,dm(y) \quad(u\in L_2(m), t\geq 0). \]
  Then $T$ has a kernel $k$ satisfying 
  \[k_t(x,y) \leq k^0_t(x,y) + C M^4 \e^{\omega t} t \quad (m^2\text{-a.a.}\; (x,y)\in \Omega^2, t\geq 0).\]
\end{corollary}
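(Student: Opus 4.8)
The plan is to apply Theorem \ref{thm:perturbation} to move from the form inequality (the hypothesis, which is exactly condition (b) of the theorem) to the pointwise semigroup inequality (condition (a)), and then to convert that operator inequality into a pointwise kernel inequality. The hypotheses of the corollary are precisely the standing assumptions of Theorem \ref{thm:perturbation} together with its condition (b), so the equivalence (b) $\Rightarrow$ (a) immediately gives a constant $C_1$ with
\[
  T(t) u \leq T_0(t) u + C_1 t \e^{\omega t} \norm{u}_1 \1 \quad (0 \leq u \in L_2(m) \cap L_1(m),\ t \geq 0).
\]
By the constant-tracking Remark following the theorem, $C_2 = C$, then $C_3 = C_2 M^2 = C M^2$, and finally $C_1 = C_3 M^2 = C M^4$, so the effective constant is exactly $C M^4$, matching the claimed bound.

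Next I would produce the kernel $k$ for $T$. Since $T(t)$ dominates nothing negative and is bounded above by $T_0(t) u + C M^4 t \e^{\omega t}\norm{u}_1 \1$, for each fixed $t$ the operator $T(t)$ is a positive operator on $L_2(m)$ that is bounded as a map into the appropriate space; the existence of an integral kernel $k_t(\cdot,\cdot) \geq 0$ follows because a positive operator dominated by a kernel operator (plus a rank-one correction) is itself representable by a kernel. Concretely, I would test the operator inequality against indicator functions $u = \mathbbold{1}_E$ of finite-measure sets $E$ and against the dual pairing with $\mathbbold{1}_F$ for finite-measure $F$, obtaining
\[
  \int_F \int_E k_t(x,y)\,dm(y)\,dm(x) \leq \int_F \int_E k^0_t(x,y)\,dm(y)\,dm(x) + C M^4 t \e^{\omega t} \, m(E)\, m(F).
\]
This says the set function $(E,F) \mapsto \iint (k_t - k^0_t - C M^4 t \e^{\omega t})$ is $\leq 0$ on all rectangles of finite measure.

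The final step is to pass from this integrated inequality, valid for all finite-measure $E$ and $F$, to the pointwise almost-everywhere bound $k_t(x,y) \leq k^0_t(x,y) + C M^4 \e^{\omega t} t$. This is a standard measure-theoretic localization: if a measurable function $g := k_t - k^0_t - C M^4 t \e^{\omega t}$ satisfies $\iint_{F \times E} g\, dm^2 \leq 0$ for all rectangles $E \times F$ with $m(E), m(F) < \infty$, then, since the rectangles generate the product $\sigma$-algebra and $m^2$ is $\sigma$-finite, $g \leq 0$ holds $m^2$-almost everywhere. I expect this localization to be the only delicate point, and the main obstacle is bookkeeping the measurability and $\sigma$-finiteness so that the product-measure Lebesgue differentiation / generating-class argument applies uniformly; once $E, F$ range over a generating ring and $m^2$ is $\sigma$-finite, standard arguments force the integrand to be nonpositive pointwise a.e., yielding the claim for each $t$, and then (choosing a countable dense set of times and using continuity or monotonicity in the estimate) for almost all $(x,y)$ simultaneously.
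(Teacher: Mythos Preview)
Your approach is correct and matches the paper's. The paper simply records that condition~(b) of Theorem~\ref{thm:perturbation} yields~(a) with $C_1=CM^4$ via the constant-tracking remark, invokes \cite[Theorem~5.9]{aa2002} for the existence of the kernel (exactly your ``positive operator dominated by a kernel operator is a kernel operator'' step, since $u\mapsto CM^4 t e^{\omega t}\norm{u}_1\1$ is itself a kernel operator with constant kernel), and then appeals to the monotone convergence theorem to pass from the operator inequality to the pointwise kernel bound---which is equivalent to your indicator-testing and generating-rectangle localization. Your final remark about a countable dense set of times is unnecessary: the assertion is for each fixed $t\ge 0$ separately, with the $m^2$-null exceptional set allowed to depend on $t$.
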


Note that the existence of the kernel follows from \cite[Theorem 5.9]{aa2002}, 
the kernel estimate then from the estimate for the semigroups in Theorem \ref{thm:perturbation} and the monotone convergence theorem; see also \cite[Korollar 2.1.11]{w2011}.

\begin{example}
We will now apply our theorem to Dirichlet forms perturbed by measures and jump processes.
Let $(\Omega,m)$ be a locally compact metric measure space and $\form_0$ a regular Dirichlet form on $L_2(m)$ 
Let $\mu$ be a non-negative Borel measure on $\Omega$ which is absolutely continuous with respect to $\form_0$-capacity.
Furthermore, let $\mu$ be in the extended Kato class with Kato bound less than $1$. 
Define $\form_{-\mu}:=\form_0-\mu$, let $A_{-\mu}$ be the operator associated with $\form_{-\mu}$ and $T_{-\mu}$ the $C_0$-semigroup (for perturbation of Dirichlet forms by measures we refer to \cite{sv1996}).
Then $A_{-\mu}$ is self-adjoint and by \cite[Theorem 3.3]{sv1996} the semigroup $T_{-\mu}$ satifies \eqref{eq:exp_bound} for all $1\leq p<\infty$.
Let $j\from \Omega\times\Omega\to \R$ be measurable, symmetric, $0\leq j\leq C$.
Define $\form$ by
\[\form(u,v):= \form_{-\mu}(u,v) + \frac{1}{2} \int_{\Omega^2} (u(x)-u(y)) (v(x)-v(y)) j(x,y) \, dm^2(x,y),\]
and $A$ be the associated operator and $T$ the $C_0$-semigroup. Then, for $0\leq u\in D(A)\cap L_1(m)$ and $0\leq v\in D(A_{-\mu}^*)\cap L_1(m)$ we compute
\begin{align*}
  \sp{-Au}{v} - \sp{u}{-A_{-\mu}^*v} & = -\frac{1}{2} \int_{\Omega^2} (u(x)-u(y)) (v(x)-v(y)) j(x,y) \, dm^2(x,y) \\
  & \leq \int u(x)v(y)j(x,y)\,dm^2(x,y) \leq C\norm{u}_1\norm{v}_1.
\end{align*}
Thus, by Corollary \ref{cor:kernel} we obtain an estimate for the kernel of the semigroup $T$ by the kernel for $T_{-\mu}$ (of course, only if there exists a kernel for $T_{-\mu}$). Note that the case $\mu=0$ recovers the result in \cite[Lemma 3.1]{bgk2009}.
\end{example}

\begin{example}
%

We will also provide a counterexample.
Define
	\begin{align*}
		D(\form_0) & := H^1(0,1),\\
		\form_0(u,v) & := \int_0^1 u'(x) v'(x)\, dx, \\
		D(\form) & := \set{u\in H^1(0,1);\; u(0) = u(1)},\\
		\form(u,v) & := \form_0(u,v).
	\end{align*}
	Let $A$ and $A_0$ be the associated operators with $\form$ and $\form_0$, respectively 
	($A$ is the Laplace with Neumann boundary conditions and $A_0$ the Laplace with periodic boundary conditions).
	Then, for $u\in D(A)$ and $v\in D(A_0^*) = D(A_0)$, an easy calculation shows
	\[\sp{-Au}{v} - \sp{u}{-A_0^*v} = u'(0)\bigl(v(1)-v(0)\bigr).\]
	Hence, \eqref{eq:one-sided} can hardly be satisfied. Therefore, perturbations by boundary conditions may behave differently.
\end{example}
  

\section*{Acknowledgements}

The authors thank Peter Stollmann, Daniel Lenz and Hendrik Vogt for many stimulating conversations.

\noindent
Christian Seifert \\
Technische Universit\"at Hamburg-Harburg \\
Institut f\"ur Mathematik \\
21073 Hamburg, Germany \\
{\tt christian.se\rlap{\textcolor{white}{hugo@egon}}ifert@tu-\rlap{\textcolor{white}{darmstadt}}harburg.de}

\medskip

\noindent
Daniel Wingert \\
Technische Universit\"at Chemnitz \\
Fakult\"at f\"ur Mathematik \\
09107 Chemnitz, Germany \\
{\tt daniel.win\rlap{\textcolor{white}{hugo@egon}}gert@mathematik.tu-\rlap{\textcolor{white}{darmstadt}}chemnitz.de}

\end{document}